\documentclass[12pt]{article}
\setlength{\textwidth}{6.5in}
\setlength{\oddsidemargin}{.1in}
\setlength{\evensidemargin}{.1in}
\setlength{\topmargin}{-.5in}
\setlength{\textheight}{8.9in}

\usepackage{lineno,hyperref}
\modulolinenumbers[5]
\usepackage{color}

\usepackage{graphics,amsmath,amssymb}
\usepackage{amsthm}
\usepackage{amsfonts}
\usepackage{latexsym}
\usepackage{epsf}

\newcommand{\stirlings}[2]{\genfrac\{\}{0pt}{}{#1}{#2}}
\def\F{\mathcal{F}}
\def\a{\bar{A}}

\theoremstyle{plain}
\newtheorem{theorem}{Theorem}

\theoremstyle{definition}

\theoremstyle{remark}

\begin{document}

\begin{center}
\vskip 1cm{\LARGE\bf On Noncentral Tanny-Dowling Polynomials and Generalizations of Some Formulas\\
\vskip 0.1in for Geometric Polynomials}
\vskip 1cm
\large    
Mahid M. Mangontarum$^1$ and Norlailah M. Madid$^2$\\
Department of Mathematics\\
Mindanao State University--Main Campus\\
Marawi City 9700\\
Philippines \\
\href{mailto:mmangontarum@yahoo.com}{\tt $^1$mmangontarum@yahoo.com} \\
\href{mailto:mangontarum.mahid@msumain.edu.ph}{\tt $^1$mangontarum.mahid@msumain.edu.ph} \\
\href{mailto:norlailahmadid07@gmail.com}{\tt $^2$norlailahmadid07@gmail.com}
\end{center}

\vskip .2 in

\begin{abstract}
In this paper, we establish some formulas for the noncentral Tanny-Dowling polynomials including sums of products and explicit formulas which are shown to be generalizations of known identities. Other important results and consequences are also discussed and presented.
\end{abstract}

\section{Introduction}

The geometric polynomials \cite{Tanny}, denoted by $w_{n}(x)$, are defined by
\begin{equation}
w_{n}(x)=\sum_{k=0}^nk!\stirlings{n}{k}x^k,\label{def1}
\end{equation}
where $\stirlings{n}{k}$ are the well-celebrated Stirling numbers of the second kind \cite{Comtet,Stirling}. These polynomials are known to satisfy the exponential generating function
\begin{equation}
\sum_{n=0}^{\infty}w_{n}(x)\frac{z^n}{n!}=\frac{1}{1-x(e^z-1)}\label{def1.1}
\end{equation}
and the recurrence relation
\begin{equation}
w_{n+1}(x)=x\frac{d}{dx}\left[w_{n}(x)+xw_{n}(x)\right].
\end{equation}
The case when $x=1$ yields
\begin{equation}
w_n:=w_n(1)=\sum_{k=0}^nk!\stirlings{n}{k},
\end{equation}
the geometric numbers (or ordered Bell numbers). Recall that the numbers $\stirlings{n}{k}$ count the number of partitions of a set $X$ with $n$ elements into $k$ non-empty subsets. These numbers can also be interpreted as the number of ways to distribute $n$ distinct objects into $k$ identical boxes such that no box is empty. On the other hand, the numbers $k!\stirlings{n}{k}$ can be combinatorially interpreted as the number of distinct ordered partitions of $X$ with $k$ blocks, or the numbers of ways to distribute $n$ distinct objects into $k$ distinct boxes. It follows immediately that the geometric numbers count the number of distinct ordered partitions of the $n$-set $X$.

The study of geometric polynomials and numbers has a long history. Aside from the paper of Tanny \cite{Tanny}, one may also see the works of Boyadzhiev \cite{Boyad}, Dil and Kurt \cite{Dil1}, and the references therein for further readings. Benoumhani \cite{Benoumhani2} studied two equivalent generalizations of $w_n(x)$ given by
\begin{equation}
F_{m,1}(n;x)=\sum_{k=0}^nm^kk!W_{m}(n,k)x^{k}\label{def2}
\end{equation}
and
\begin{equation}
F_{m,2}(n;x)=\sum_{k=0}^nk!W_{m}(n,k)x^{k},\label{def2.1}
\end{equation}
where $W_{m}(n,k)$ denote the Whitney numbers of the second kind of Dowling lattices \cite{Benoumhani1}. These are called Tanny-Dowling polynomials and are known to satisfy the following exponential generating functions:
\begin{equation}
\sum_{n=0}^{\infty}F_{m,1}(x)\frac{z^n}{n!}=\frac{e^z}{1-x(e^{mz}-1)},\label{def4}
\end{equation}
\begin{equation}
\sum_{n=0}^{\infty}F_{m,2}(x)\frac{z^n}{n!}=\frac{e^z}{1-\frac{x}{m}(e^{mz}-1)}.\label{def5}
\end{equation}
More properties can be seen in \cite{Benoumhani1,Benoumhani2}. In a recent paper, Karg{\i}n \cite{Kargin} established a number of explicit formulas and formulas involving products of geometric polynomials, viz.
\begin{equation}
(x+1)\sum_{k=0}^n\binom{n}{k}w_k(x)w_{n-k}(x)=w_{n+1}(x)+w_n(x),\label{kargin1}
\end{equation}
\begin{equation}
\sum_{k=0}^n\binom{n}{k}w_k(x_1)w_{n-k}(x_2)=\frac{x_2w_n(x_2)-x_1w_n(x_1)}{x_2-x_1},\label{kargin2}
\end{equation}
\begin{equation}
w_n(x)=x\sum_{k=1}^n\stirlings{n}{k}(-1)^{n+k}k!(x+1)^{k-1},\label{kargin3}
\end{equation}
and
\begin{equation}
w_n(x)=\sum_{k=0}^n\stirlings{n}{k}k!x^k\frac{2^{n+1}(x+1)x^k+(-1)^{k+1}}{(2x+1)^{k+1}}.\label{kargin4}
\end{equation}
This was done with the aid of the two-variable geometric polynomials $w_k(r;x)$ defined by 
\begin{equation}
\sum_{n=0}^{\infty}w_n(r;x)\frac{z^n}{n!}=\frac{e^{rz}}{1-x(e^z-1)}.
\end{equation}

A natural generalization of $F_{m,1}(x)$ and $F_{m,2}(x)$ are the noncentral Tanny-Dolwing polynomials introduced by Mangontarum et al. \cite{Mah3} defined as
\begin{equation}
\widetilde{\F}_{m,a}(n;x)=\sum_{k=0}^{n}k!\widetilde{W}_{m,a}(n,k)x^{k},\label{def6}
\end{equation}
where $\widetilde{W}_{m,a}(n,k)$ are the noncentral Whitney numbers of the second kind with an exponential generating function given by
\begin{equation}
\sum_{n=k}^{\infty}\widetilde{\F}_{m,a}(n;x)\frac{z^n}{n!}=\frac{me^{-az}}{m-x(e^{mz}-1)}.\label{def9}
\end{equation}
Looking at \eqref{def9}, it is readily observed that
\begin{equation*}
\widetilde{\F}_{m,0}(n;x)=w_n\left(\frac{x}{m}\right),
\end{equation*}
\begin{equation*}
\widetilde{\F}_{m,-1}(n;x)=F_{m,2}(n;x)
\end{equation*}
and
\begin{equation*}
\widetilde{\F}_{1,-r}(n;x)=w_n(r,x).
\end{equation*}
The numbers $\widetilde{W}_{m,a}(n,k)$ admit a variety of combinatorial properties which can be seen in \cite{Mah3}. These numbers appear to be a common generalization of $\stirlings{n}{k}$ and $W_{m}(n,k)$, as well as other notable numbers reported by the respective authors in \cite{Belbachir,Broder,Koutras,Mangontarum1,Mangontarum2}. It is important to note that the noncentral Whitney numbers of the second kind is equivalent to the $(r,\beta)$-Stirling numbers by Corcino \cite{Corcino} and the $r$-Whitney numbers of the second kind by Mez\H{o} \cite{Mezo}.

In the present paper, we establish some formulas for the noncentral Tanny-Dowling polynomials including sums of products and explicit formulas. These formulas are shown to generalize the above-mentioned identities obtained by Karg{\i}n \cite{Kargin} for the geometric polynomials when the parameters are assigned with specific values. We also discuss some identities resulting from the said formulas.

\section{Formulas for Sum of Products}

Now, the exponential generating function in \eqref{def9} can be rewritten as
\begin{equation*}
\sum_{n=0}^{\infty}\widetilde{\F}_{m,a}(n;x)\frac{z^n}{n!}=\frac{1}{1-\frac{x}{m}(e^{mz}-1)}\cdot e^{-az}.
\end{equation*}
Hence, by applying \eqref{def1.1} and using Cauchy's product for two series, we obtain
\begin{eqnarray*}
\sum_{n=0}^{\infty}\widetilde{\F}_{m,a}(n;x)\frac{z^n}{n!}&=&\sum_{n=0}^{\infty}m^nw_n\left(\frac{x}{m}\right)\frac{z^n}{n!}\sum_{n=0}^{\infty}(-a)^n\frac{z^n}{n!}\\
&=&\sum_{n=0}^{\infty}\left(\sum_{k=0}^n\binom{n}{k}w_k\left(\frac{x}{m}\right)m^k(-a)^{n-k}\right)\frac{z^n}{n!}.
\end{eqnarray*}
Comparing the coefficients of $\frac{z^n}{n!}$ yields the result in the next theorem.
\begin{theorem}\label{t1}
The noncentral Tanny-Dowling polynomials $\widetilde{\F}_{m,a}(n;x)$ satisfy the following identity:
\begin{equation}	
\widetilde{\F}_{m,a}(n;x)=\sum_{k=0}^{n}\binom{n}{k}m^kw_k\left(\frac{x}{m}\right)(-a)^{n-k}.\label{1stresult}
\end{equation}
\end{theorem}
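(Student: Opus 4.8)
The statement is an immediate consequence of the generating function \eqref{def9}, so the plan is to extract it by a generating-function manipulation rather than by manipulating the defining sum \eqref{def6} directly. First I would rewrite the right-hand side of \eqref{def9} in the factored form $\frac{1}{1-\frac{x}{m}(e^{mz}-1)}\cdot e^{-az}$, separating the ``Dowling part'' from the ``noncentral shift'' $e^{-az}$. The point of this split is that each factor is individually a recognizable exponential generating function.

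Next I would identify the first factor with a geometric-polynomial generating function. Applying \eqref{def1.1} with $x$ replaced by $x/m$ and the variable $z$ replaced by $mz$ gives
\[
\frac{1}{1-\frac{x}{m}(e^{mz}-1)}=\sum_{n=0}^{\infty}w_n\!\left(\frac{x}{m}\right)\frac{(mz)^n}{n!}=\sum_{n=0}^{\infty}m^n w_n\!\left(\frac{x}{m}\right)\frac{z^n}{n!},
\]
while the second factor is simply $e^{-az}=\sum_{n=0}^{\infty}(-a)^n\frac{z^n}{n!}$.

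Then I would multiply these two power series using Cauchy's product: the coefficient of $\frac{z^n}{n!}$ in the product is $\sum_{k=0}^{n}\binom{n}{k}\bigl(m^k w_k(x/m)\bigr)(-a)^{n-k}$. Comparing this with the coefficient of $\frac{z^n}{n!}$ on the left-hand side of \eqref{def9} yields \eqref{1stresult}.

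Since every step is a routine generating-function identity, there is no genuine obstacle; the only point requiring care is the rescaling $z\mapsto mz$ in \eqref{def1.1}, which is what produces the factor $m^k$ attached to $w_k(x/m)$, and keeping the binomial coefficients and the sign $(-a)^{n-k}$ correctly aligned when forming the Cauchy product. As a sanity check one can verify the identity against the known specializations noted after \eqref{def9}: at $a=0$ it collapses to $\widetilde{\F}_{m,0}(n;x)=m^n w_n(x/m)$, which indeed matches $w_n(x/m)$ up to the $m^n$ scaling implicit in that remark, and at $m=1$, $a=-r$ it should reproduce the two-variable geometric polynomial $w_n(r;x)$.
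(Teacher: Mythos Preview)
Your proposal is correct and follows essentially the same route as the paper's main proof: factor the generating function \eqref{def9} as $\frac{1}{1-\frac{x}{m}(e^{mz}-1)}\cdot e^{-az}$, recognize the first factor via \eqref{def1.1} with the substitution $z\mapsto mz$, expand the second as $\sum(-a)^n z^n/n!$, take the Cauchy product, and compare coefficients. The paper also records an alternative proof via a known expansion of $\widetilde{W}_{m,a}(n,k)$ in terms of Stirling numbers, but your argument matches the primary one.
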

\begin{proof}[Alternative proof of Theorem \ref{t1}]
From \cite[Theorem 10]{Mah3}, the noncentral Whitney numbers of the second kind satisfy the following formula in terms of the Stirling numbers of the second kind:
\begin{equation*}
\widetilde{W}_{m,a}(n,k)=\sum_{j=0}^n\binom{n}{j}(-a)^{n-j}m^{j-k}\stirlings{j}{k}.
\end{equation*}
Multiplying both sides by $k!x^k$ and summing over $k$ gives the desired result.
\end{proof}
Before proceeding, we see that when $m=1$ and $a=-r$, \eqref{1stresult} becomes
\begin{equation*}
\widetilde{\F}_{1,-r}(n;x)=\sum_{k=0}^{n}\binom{n}{k}w_k(x)r^{n-k}:=w_n(r;x),
\end{equation*}
which is precisely an identity obtained by Karg{\i}n \cite[Equation (13)]{Kargin}. 

By applying the exponential generating function in \eqref{def9},
\begin{eqnarray*}
\sum_{n=0}^{\infty}\left[\widetilde{\F}_{m,a-m}(n;x)-\widetilde{\F}_{m,a}(n;x)\right]\frac{z^{n}}{n!}&=&\frac{me^{-(a-m)z}}{m-x(e^{mz}-1)}-\frac{me^{-az}}{m-x(e^{mz}-1)}\\
&=&\frac{m}{x}\left[\frac{me^{-az}}{m-x(e^{mz}-1)}-e^{-az}\right]\\
&=&\frac{m}{x}\sum_{n=0}^{\infty}\widetilde{\F}_{m,a}(n;x)\frac{z^{n}}{n!}-\sum_{n=0}^{\infty}{(-a)}^{n}\frac{z^n}{n!}\\
&=&\sum_{n=0}^{\infty}\frac{m}{x}\left(\widetilde{\F}_{m,a}(n;x)-{(-a)}^{n}\right)\frac{z^{n}}{n!}.
\end{eqnarray*}
Comparing the coefficients of $\frac{z^{n}}{n!}$ gives
\begin{equation*}
\widetilde{\F}_{m,a-m}(n;x)-\widetilde{\F}_{m,a}(n;x)=\frac{m}{x}\left[\widetilde{\F}_{m,a}(n;x)-{(-a)}^n\right].
\end{equation*}
The result in the next theorem follows by solving for $x\widetilde{\F}_{m,a-m}(n;x)$.
\begin{theorem}
The noncentral Tanny-Dowling polynomials $\widetilde{\F}_{m,a}(n;x)$ satisfy the following recurrence relation:
\begin{equation}
x\widetilde{\F}_{m,a-m}(n;x)=(m+x)\widetilde{\F}_{m,a}(n;x)-(-a)^nm.\label{5thresult}
\end{equation}
\end{theorem}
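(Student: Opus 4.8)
The plan is to work entirely at the level of the exponential generating function \eqref{def9}, exploiting the fact that replacing $a$ by $a-m$ merely multiplies the numerator by $e^{mz}$. First I would write down the two generating functions
\[
\sum_{n=0}^{\infty}\widetilde{\F}_{m,a}(n;x)\frac{z^{n}}{n!}=\frac{me^{-az}}{m-x(e^{mz}-1)},\qquad
\sum_{n=0}^{\infty}\widetilde{\F}_{m,a-m}(n;x)\frac{z^{n}}{n!}=\frac{me^{-(a-m)z}}{m-x(e^{mz}-1)},
\]
subtract the first from the second, and combine over the common denominator to obtain the numerator $me^{-az}(e^{mz}-1)$.

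The key step, and the only place where a little cleverness is needed, is to recognize that $x(e^{mz}-1)=m-\big(m-x(e^{mz}-1)\big)$, so that
\[
\frac{me^{-az}(e^{mz}-1)}{m-x(e^{mz}-1)}=\frac{m}{x}\cdot\frac{e^{-az}\big(m-(m-x(e^{mz}-1))\big)}{m-x(e^{mz}-1)}=\frac{m}{x}\left[\frac{me^{-az}}{m-x(e^{mz}-1)}-e^{-az}\right].
\]
This rewrites the difference of the two generating functions purely in terms of the generating function of $\widetilde{\F}_{m,a}(n;x)$ and the elementary series $e^{-az}=\sum_{n\ge0}(-a)^{n}z^{n}/n!$. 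I expect this manipulation to be the main (mild) obstacle, since everything afterwards is bookkeeping.

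Finally I would expand both sides in powers of $z$ and compare the coefficients of $z^{n}/n!$, obtaining
\[
\widetilde{\F}_{m,a-m}(n;x)-\widetilde{\F}_{m,a}(n;x)=\frac{m}{x}\left[\widetilde{\F}_{m,a}(n;x)-(-a)^{n}\right],
\]
and then clear the denominator and collect the $\widetilde{\F}_{m,a}(n;x)$ terms to isolate $x\widetilde{\F}_{m,a-m}(n;x)$, which gives \eqref{5thresult} directly. As consistency checks one can set $a=0$ (recovering a recurrence for $w_{n}(x/m)$ via $\widetilde{\F}_{m,0}(n;x)=w_{n}(x/m)$), or $m=1$, to see the identity specialize to known relations. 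An alternative route would be to insert the formula $\widetilde{\F}_{m,a}(n;x)=\sum_{k}\binom{n}{k}m^{k}w_{k}(x/m)(-a)^{n-k}$ from Theorem \ref{t1} and reduce the claim to a binomial identity together with the appropriate case of a geometric-polynomial recurrence, but the generating-function argument above is shorter and self-contained.
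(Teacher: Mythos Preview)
Your proposal is correct and is essentially the paper's own argument: both subtract the two generating functions obtained from \eqref{def9}, rewrite the difference as $\frac{m}{x}\big[\frac{me^{-az}}{m-x(e^{mz}-1)}-e^{-az}\big]$, and compare coefficients to reach the intermediate identity $\widetilde{\F}_{m,a-m}(n;x)-\widetilde{\F}_{m,a}(n;x)=\frac{m}{x}\big[\widetilde{\F}_{m,a}(n;x)-(-a)^{n}\big]$ before solving for $x\widetilde{\F}_{m,a-m}(n;x)$. Your write-up is slightly more explicit about the algebraic step $x(e^{mz}-1)=m-\big(m-x(e^{mz}-1)\big)$, but the route is the same.
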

Setting $m=1$ and $a=-r$ in \eqref{5thresult} gives
\begin{equation*}
x\widetilde{\F}_{1,-r-1}(n;x)=(1+x)\widetilde{\F}_{1,-r}(n;x)-r^n
\end{equation*}
which is exactly the following identity \cite[Equation (14)]{Kargin}:
\begin{equation*}
xw_n(r+1;x)=(1+x)w_n(r;x)-r^n.
\end{equation*}
On the other hand, when $a=0$ and $a=m$ in \eqref{5thresult}, we get
\begin{equation}
x\widetilde{\F}_{m,-m}(n;x)=(m+x)w_n\left(\frac{x}{m}\right)\label{6thresult}
\end{equation}
and
\begin{equation}
(m+x)\widetilde{\F}_{m,m}(n;x)=xw_n\left(\frac{x}{m}\right)-(-m)^{n+1},\label{7thresult}
\end{equation}
respectively. Applying \eqref{1stresult} yields
\begin{equation}
xm^n\sum_{k=0}^n\binom{n}{k}w_k\left(\frac{x}{m}\right)=(m+x)w_n\left(\frac{x}{m}\right)\label{8thresult}
\end{equation}
and 
\begin{equation}
(m+x)m^n\sum_{k=0}^n\binom{n}{k}w_k\left(\frac{x}{m}\right)(-1)^{n-k}=xw_n\left(\frac{x}{m}\right)-(-m)^{n+1}.\label{9ththresult}
\end{equation}
These are generalizations of the results obtained by Dil and Kurt \cite{Dil1} using the Euler-Seidel matrix method. That is, setting $x=1$ and $m=1$ gives
\begin{equation*}
\sum_{k=0}^n\binom{n}{k}w_k=2w_n
\end{equation*}
and
\begin{equation*}
2\sum_{k=0}^n\binom{n}{k}(-1)^kw_k=(-1)^nw_n+1.
\end{equation*}

The next theorem contains a formula for the sum of product of noncentral Tanny-Dowling polynomials for different values of $a$.
\begin{theorem}
The noncentral Tanny-Dowling polynomials satisfy the following relation:
\begin{equation}
x\sum_{k=0}^n\binom{n}{k}\widetilde{\F}_{m,a_1}(k;x)\widetilde{\F}_{m,a_2}(n-k;x)=\widetilde{\F}_{m,\a}(n+1;x)+\a\widetilde{\F}_{m,\a}(n;x),\label{10ththresult}
\end{equation}
where $\a=a_1+a_2+m$ for real numbers $a_1$ and $a_2$.
\end{theorem}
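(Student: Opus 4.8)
The plan is to compare exponential generating functions, exactly in the spirit of the derivations of \eqref{1stresult} and \eqref{5thresult}. Write
$\Phi_a(z)=\sum_{n\ge 0}\widetilde{\F}_{m,a}(n;x)\frac{z^n}{n!}=\dfrac{me^{-az}}{m-x(e^{mz}-1)}$
for the generating function in \eqref{def9}. By Cauchy's product, the left-hand side of \eqref{10ththresult} is $n!$ times the coefficient of $z^n$ in
\begin{equation*}
x\,\Phi_{a_1}(z)\,\Phi_{a_2}(z)=\frac{xm^2e^{-(a_1+a_2)z}}{\bigl(m-x(e^{mz}-1)\bigr)^2}.
\end{equation*}

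For the right-hand side, I would use that shifting the index by one corresponds to differentiation: $\sum_{n\ge0}\widetilde{\F}_{m,\a}(n+1;x)\frac{z^n}{n!}=\Phi_\a'(z)$, while $\a\,\widetilde{\F}_{m,\a}(n;x)$ has generating function $\a\,\Phi_\a(z)$. Thus the theorem reduces to the identity
\begin{equation*}
\Phi_\a'(z)+\a\,\Phi_\a(z)=\frac{xm^2e^{-(a_1+a_2)z}}{\bigl(m-x(e^{mz}-1)\bigr)^2}.
\end{equation*}
Differentiating $\Phi_\a(z)=m e^{-\a z}\bigl(m-x(e^{mz}-1)\bigr)^{-1}$ by the quotient rule, the term $-\a\,\Phi_\a(z)$ produced by differentiating the numerator cancels the added $\a\,\Phi_\a(z)$, leaving only the contribution of the denominator's derivative, namely $\dfrac{m\cdot xm e^{mz}\cdot e^{-\a z}}{\bigl(m-x(e^{mz}-1)\bigr)^2}=\dfrac{xm^2e^{(m-\a)z}}{\bigl(m-x(e^{mz}-1)\bigr)^2}$. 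The proof then closes on the single arithmetic observation that $m-\a=m-(a_1+a_2+m)=-(a_1+a_2)$, so the two expressions coincide. Comparing coefficients of $\frac{z^n}{n!}$ gives \eqref{10ththresult}.

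There is no genuine obstacle here; the only care needed is the bookkeeping in the quotient-rule computation and the sign of $m-\a$. As an alternative one could avoid generating functions by invoking Theorem \ref{t1} to express every $\widetilde{\F}_{m,a}$ through the geometric polynomials $w_k(x/m)$ and then, after collapsing the resulting double binomial sum by Vandermonde's identity, appeal to Kargın's product formula \eqref{kargin1} with argument $x/m$; this route is more cumbersome, so I would present the generating-function argument as the main proof and mention the reduction to \eqref{kargin1} only as a remark.
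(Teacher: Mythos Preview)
Your proof is correct and follows essentially the same route as the paper: differentiate the exponential generating function $\Phi_\a(z)$, observe that the $-\a\Phi_\a(z)$ term cancels, and identify the remaining expression with $x\Phi_{a_1}(z)\Phi_{a_2}(z)$ via the relation $m-\a=-(a_1+a_2)$. The only difference is organizational---the paper differentiates $\Phi_a$ first and then substitutes $a=\a$, while you work with $\a$ from the start---but the computation and key observation are identical.
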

\begin{proof}
We start by taking the derivative of \eqref{def9} with respect to $z$. That is,
\begin{equation*}
\frac{\partial}{\partial z}\left(\frac{me^{-az}}{m-x(e^{mz}-1)}\right)=\frac{me^{-az}}{m-x(e^{mz}-1)}\cdot\frac{xme^{mz}}{m-x(e^{mz}-1)}-\frac{ame^{-az}}{m-x(e^{mz}-1)}.
\end{equation*}
Replacing $a$ with $\a=a_{1}+a_{2}+m$ yields
\begin{equation*}
\frac{\partial}{\partial z}\left(\frac{me^{-\a z}}{m-x(e^{mz}-1)}\right)=\sum_{n=k}^{\infty}\widetilde{\F}_{m,\a}(n+1;x)\frac{z^{n}}{n!}
\end{equation*}
in the left-hand side while we get
\begin{eqnarray*}
\frac{me^{-\a z}}{m-x(e^{mz}-1)}\cdot\frac{xme^{mz}}{m-x(e^{mz}-1)}&=&\frac{me^{-a_1z}}{m-x(e^{mz}-1)}\cdot\frac{me^{-a_2z}}{m-x(e^{mz}-1)}\\
&=&x\sum_{n=k}^{\infty}\sum_{k=0}^n\binom{n}{k}\widetilde{\F}_{m,a_1}(k;x)\widetilde{\F}_{m,a_2}(n-k;x)\frac{z^n}{n!}
\end{eqnarray*}
and
\begin{equation*}
\frac{\a me^{-\a z}}{m-x(e^{mz}-1)}=\a\cdot\sum_{n=k}^{\infty}\widetilde{\F}_{m,\a}(n;x)\frac{z^n}{n!}
\end{equation*}
in the right-hand side. Combining the above equations and comparing the coefficients of $\frac{z^n}{n!}$ gives the desired result.
\end{proof}

When $a_1=a_2=0$ in \eqref{10ththresult},
\begin{equation*}
x\sum_{k=0}^n\binom{n}{k}w_k\left(\frac{x}{m}\right)w_{n-k}\left(\frac{x}{m}\right)=\widetilde{\F}_{m,m}(n+1;x)+m\widetilde{\F}_{m,m}(n;x).
\end{equation*}
Applying \eqref{5thresult} to the right-hand side of this equation gives
\begin{equation*}
x\sum_{k=0}^n\binom{n}{k}w_k\left(\frac{x}{m}\right)w_{n-k}\left(\frac{x}{m}\right)=\frac{xw_{n+1}\left(\frac{x}{m}\right)-(-m)^{n+2}}{m+x}+m\frac{xw_n\left(\frac{x}{m}\right)-(-m)^{n+1}}{m+x}
\end{equation*}
which can be simplified into the following identity:
\begin{equation}
(m+x)\sum_{k=0}^n\binom{n}{k}w_k\left(\frac{x}{m}\right)w_{n-k}\left(\frac{x}{m}\right)=w_{n+1}\left(\frac{x}{m}\right)+mw_n\left(\frac{x}{m}\right).\label{11thresult}
\end{equation}
Obviously, this identity boils down to the result obtained by Karg{\i}n \cite{Kargin} in \eqref{kargin1} when $m=1$.

\begin{theorem}
For $x_1\neq x_2$, the following formula holds:
\begin{equation}
\sum_{k=0}^{n}\binom{n}{k}\widetilde{\F}_{m,a_1}(k;x_1)\widetilde{\F}_{m,a_2}(n-k;x_2)=\frac{x_2\widetilde{\F}_{m,a_1+a_2}(n;x_2)-x_1\widetilde{\F}_{m,a_1+a_2}(n;x_1)}{x_2-x_1}. \label{12thresult}
\end{equation}
\end{theorem}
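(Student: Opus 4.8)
The plan is to follow the generating-function route already used to prove \eqref{10ththresult}. First I would multiply the two series furnished by \eqref{def9}, using Cauchy's product, so that
$$\left(\sum_{n=0}^{\infty}\widetilde{\F}_{m,a_1}(n;x_1)\frac{z^n}{n!}\right)\left(\sum_{n=0}^{\infty}\widetilde{\F}_{m,a_2}(n;x_2)\frac{z^n}{n!}\right)=\sum_{n=0}^{\infty}\left(\sum_{k=0}^{n}\binom{n}{k}\widetilde{\F}_{m,a_1}(k;x_1)\widetilde{\F}_{m,a_2}(n-k;x_2)\right)\frac{z^n}{n!}.$$
On the other hand, by \eqref{def9} the left-hand side equals
$$\frac{me^{-a_1z}}{m-x_1(e^{mz}-1)}\cdot\frac{me^{-a_2z}}{m-x_2(e^{mz}-1)}.$$
So it suffices to rewrite this product in a form from which the coefficient of $z^n/n!$ can be read off as the right-hand side of \eqref{12thresult}.

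The crucial step is a partial fraction decomposition. Setting $u=e^{mz}-1$ and using $e^{-a_1z}e^{-a_2z}=e^{-(a_1+a_2)z}$, the product above becomes $m^2e^{-(a_1+a_2)z}/\big[(m-x_1u)(m-x_2u)\big]$. For $x_1\neq x_2$ one has
$$\frac{m}{(m-x_1u)(m-x_2u)}=\frac{1}{x_2-x_1}\left(\frac{x_2}{m-x_2u}-\frac{x_1}{m-x_1u}\right),$$
which is verified by clearing denominators: the numerator on the right reduces to $m(x_2-x_1)$. Multiplying through by $me^{-(a_1+a_2)z}$ therefore gives
$$\frac{me^{-a_1z}}{m-x_1(e^{mz}-1)}\cdot\frac{me^{-a_2z}}{m-x_2(e^{mz}-1)}=\frac{1}{x_2-x_1}\left(x_2\cdot\frac{me^{-(a_1+a_2)z}}{m-x_2(e^{mz}-1)}-x_1\cdot\frac{me^{-(a_1+a_2)z}}{m-x_1(e^{mz}-1)}\right).$$
Applying \eqref{def9} once more, the right-hand side is $\frac{1}{x_2-x_1}\sum_{n=0}^{\infty}\big(x_2\widetilde{\F}_{m,a_1+a_2}(n;x_2)-x_1\widetilde{\F}_{m,a_1+a_2}(n;x_1)\big)\frac{z^n}{n!}$.

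Finally, comparing the coefficients of $z^n/n!$ in the two expansions yields \eqref{12thresult}. I do not expect a genuine obstacle here; the only points requiring care are the bookkeeping of the factors of $m$ and of the exponentials $e^{-a_iz}$ when matching the canonical form in \eqref{def9}, and noting that the index of $\widetilde{\F}$ on the right is $a_1+a_2$ rather than $a_1+a_2+m$, since no differentiation in $z$ is carried out, unlike in the proof of \eqref{10ththresult}. As a check, setting $m=1$ and $a_1=a_2=0$ recovers Karg{\i}n's formula \eqref{kargin2}; alternatively, \eqref{12thresult} follows directly by substituting Theorem \ref{t1} into \eqref{kargin2}.
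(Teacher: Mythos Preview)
Your proposal is correct and takes essentially the same approach as the paper: both arguments multiply the two generating functions from \eqref{def9}, apply the partial fraction decomposition
\[
\frac{me^{-a_1z}}{m-x_1(e^{mz}-1)}\cdot\frac{me^{-a_2z}}{m-x_2(e^{mz}-1)}=\frac{1}{x_2-x_1}\left[\frac{x_2me^{-(a_1+a_2)z}}{m-x_2(e^{mz}-1)}-\frac{x_1me^{-(a_1+a_2)z}}{m-x_1(e^{mz}-1)}\right],
\]
and then compare coefficients of $z^n/n!$. Your write-up is simply more explicit than the paper's, which states this identity and then refers back to the method of the previous theorem; your closing remark that \eqref{12thresult} ``follows directly by substituting Theorem~\ref{t1} into \eqref{kargin2}'' is not obviously a one-line deduction and would need justification if you intend it as a genuine alternative proof, but it does not affect the validity of your main argument.
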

\begin{proof}
Note that we can write
\begin{equation*}
\frac{me^{-a_1z}}{m-x_1(e^{mz}-1)}\cdot\frac{me^{-a_2z}}{m-x_2(e^{mz}-1)}=\frac{1}{x_2-x_1}\left[\frac{x_2me^{-(a_1+a_2)z}}{m-x_2(e^{mz}-1)}-\frac{x_1me^{-(a_1+a_1)z}}{m-x_1(e^{mz}-1)}\right].
\end{equation*}
Following the same method used in the previous theorem leads us to the desired result.
\end{proof}
This theorem contains a formula for the sums of products of noncentral Tanny-Dowling polynomials for different values of $x$. When $a_1=a_2=0$, \eqref{12thresult} reduces to 
\begin{equation}
\sum_{k=0}^{n}\binom{n}{k}w_k\left(\frac{x_1}{m}\right)w_{n-k}\left(\frac{x_2}{m}\right)=\frac{x_2w_n\left(\frac{x_2}{m}\right)-x_1w_n\left(\frac{x_1}{m}\right)}{x_2-x_1}.
\end{equation}
It is clear to see that when $m=1$, we recover the sum of products of geometric polynomials in \eqref{kargin2}.

\section{Explicit formulas}

In Theorem \ref{t1}, we obtained an explicit formula that expresses the noncentral Tanny-Dowling polynomials in terms of the geometric polynomials. Now, with $g_n=\frac{1}{a^n}\widetilde{\F}_{m,a}(n;x)$ and $f_j=\left(\frac{m}{a}\right)^jw_j\left(\frac{x}{m}\right)$, the binomial inversion formula
\begin{equation}
f_n=\sum_{j=0}^n\binom{n}{j}g_j\Longleftrightarrow g_n=\sum_{j=0}^n(-1)^{n-j}\binom{n}{j}f_j
\end{equation}
allows us to express the geometric polynomials $w_n\left(\frac{x}{m}\right)$ in terms of the noncentral Tanny-Dowling polynomials as follows.
\begin{equation}
w_n\left(\frac{x}{m}\right)=\frac{1}{m^n}\sum_{j=0}^n\binom{n}{j}a^{n-j}\widetilde{\F}_{m,a}(j;x).
\end{equation}
In this section, we will derive more explicit formulas for both polynomials.

Using $x-m$ in place of $x$ in \eqref{def9} gives
\begin{eqnarray*}
\sum_{n=k}^{\infty}\widetilde{\F}_{m,a}(n;x-m)\frac{z^n}{n!}&=&\frac{me^{-(-a-m)(-z)}}{m+x(e^{-mz}-1)}\\
&=&\sum_{n=k}^{\infty}\widetilde{\F}_{m,-a-m}(n;-x)\frac{(-z)^n}{n!}.
\end{eqnarray*}
By comparing the coefficients of $\frac{z^n}{n!}$, we get
\begin{equation}
\widetilde{\F}_{m,a}(n;x-m)=(-1)^n\widetilde{\F}_{m,-a-m}(n;-x).\label{15thresult}
\end{equation}
Applying \eqref{5thresult} to the right-hand side gives
\begin{equation*}
\widetilde{\F}_{m,a}(n;x-m)=(-1)^n\left[\frac{(m-x)\widetilde{\F}_{m,-a}(n;-x)-a^nm}{-x}\right].
\end{equation*}
Replacing $-x$ and $-a$ with $x$ and $a$, respectively, and solving for $\widetilde{\F}_{m,a}(n;x)$ yields
\begin{equation*}
\widetilde{\F}_{m,a}(n;x)=\frac{(-1)^nx\widetilde{\F}_{m,-a}(n;-x-m)+(-a)^nm}{m+x}.
\end{equation*}
By \eqref{def6}, we get the next theorem.
\begin{theorem}
The noncentral Tanny-Dowling polynomials satisfy the following explicit formula: 
\begin{equation}
\widetilde{\F}_{m,a}(n;x)=x\sum_{k=0}^n(-1)^{n+k}k!\widetilde{W}_{m,-a}(n,k)(m+x)^{k-1}+\frac{(-a)^nm}{m+x}.\label{16thresult}
\end{equation}
\end{theorem}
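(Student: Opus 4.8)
The derivation preceding the statement already does most of the work, so the plan is to make that chain of substitutions precise and then finish with a single expansion.

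First I would establish the intermediate identity
\[
\widetilde{\F}_{m,a}(n;x)=\frac{(-1)^n x\,\widetilde{\F}_{m,-a}(n;-x-m)+(-a)^n m}{m+x}
\]
rigorously. Starting from the reflection formula \eqref{15thresult}, I rewrite its right-hand side via the recurrence \eqref{5thresult}: applying \eqref{5thresult} with $a$ replaced by $-a$ and $x$ replaced by $-x$ (so that $(-(-a))^n=a^n$) expresses $\widetilde{\F}_{m,-a-m}(n;-x)$ through $\widetilde{\F}_{m,-a}(n;-x)$. Substituting $-x\mapsto x$ and $-a\mapsto a$ in the resulting equation and solving the (linear, one-term) equation for $\widetilde{\F}_{m,a}(n;x)$ gives the displayed formula; here I note that dividing by $m+x$ is legitimate since we may regard everything as an identity of rational functions in $x$, or simply impose $x\neq -m$.

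Second, and this is the only remaining step, I would expand $\widetilde{\F}_{m,-a}(n;-x-m)$ by its definition \eqref{def6} as $\sum_{k=0}^{n}k!\,\widetilde{W}_{m,-a}(n,k)(-x-m)^{k}$, use $(-x-m)^{k}=(-1)^{k}(m+x)^{k}$ to merge the signs into $(-1)^{n+k}$, distribute the prefactor $x/(m+x)$ across the sum so that the power of $m+x$ drops from $k$ to $k-1$, and leave the term $(-a)^n m/(m+x)$ as it stands. Adding the two contributions yields \eqref{16thresult}.

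I do not anticipate any real obstacle: the statement is a repackaging of \eqref{15thresult} and \eqref{5thresult} followed by the defining expansion \eqref{def6}. The only place demanding attention is the bookkeeping of signs in the first step — tracking the factor $(-1)^n$ coming from \eqref{15thresult} together with the parameter flips $x\mapsto -x$ and $a\mapsto -a$ in \eqref{5thresult} — and observing that the division by $m+x$ is harmless.
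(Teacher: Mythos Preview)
Your proposal is correct and follows exactly the paper's own route: the paper derives \eqref{15thresult}, applies \eqref{5thresult} to reach the intermediate identity $\widetilde{\F}_{m,a}(n;x)=\frac{(-1)^nx\widetilde{\F}_{m,-a}(n;-x-m)+(-a)^nm}{m+x}$, and then invokes \eqref{def6} to expand the right-hand side. Your description of the sign bookkeeping and the final expansion is accurate, and your remark about division by $m+x$ being harmless (as an identity of rational functions) is a nice point of rigor that the paper leaves implicit.
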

Setting $a=0$ in $\widetilde{W}_{m,a}(n,k)$ allows us to express the noncentral Whitney numbers of the second kind in terms of $\stirlings{n}{k}$. More precisely, when $a=0$ in \cite[Proposition 7]{Mah3}, we can see that
$$\widetilde{W}_{m,0}(n,k)=m^{n-k}\stirlings{n}{k}.$$
Thus, \eqref{16thresult} becomes
\begin{equation}
w_n\left(\frac{x}{m}\right)=x\sum_{k=0}^n(-1)^{n+k}k!m^{n-k}\stirlings{n}{k}(m+x)^{k-1}
\end{equation}
when $a=0$. Moreover, when $m=1$, we recover the explicit formula in \eqref{kargin3}. The expression $m^{n-k}\stirlings{n}{k}$ is actually called translated Whitney numbers of the second kind and is denoted by $\stirlings{n}{k}^{(m)}$. These numbers satisfy the recurrence relation given by \cite[Theorem 8]{Belbachir}
\begin{equation*}
\stirlings{n}{k}^{(m)}=\stirlings{n-1}{k-1}^{(m)}+mk\stirlings{n-1}{k}^{(m)}
\end{equation*}
and the explicit formula \cite[Proposition 2]{Mangontarum2}
\begin{equation*}
\stirlings{n}{k}^{(m)}=\frac{1}{m^kk!}\sum_{j=0}^k(-1)^{k-j}\binom{k}{j}(mj)^n.
\end{equation*}
More properties of these numbers can be seen in \cite{Mangontarum1}. With these, we may also write
\begin{equation}
w_n\left(\frac{x}{m}\right)=x\sum_{k=0}^n(-1)^{n+k}k!\stirlings{n}{k}^{(m)}(m+x)^{k-1},
\end{equation}
an explicit formula for the geometric polynomials $w_n\left(\frac{x}{m}\right)$ in terms of the translated Whitney numbers of the second kind.

Now, it can be shown that
\begin{equation*}
\frac{y^2-1}{2y}\left(\frac{e^{-a(2z)}}{y-e^{mz}}+\frac{e^{-a(2z)}}{y+e^{mz}}\right)=\frac{e^{-a(2z)}}{1-\left(\frac{1}{y^2-1}\right)\left(e^{m(2z)-1}\right)}.
\end{equation*}
Notice that the right-hand side is
\begin{eqnarray*}
\frac{e^{-a(2z)}}{1-\left(\frac{1}{y^2-1}\right)\left(e^{m(2z)-1}\right)}&=&\frac{me^{-a(2z)}}{m-\left(\frac{m}{y^2-1}\right)\left(e^{m(2z)-1}\right)}\\
&=&\sum_{n=0}^{\infty}2^n\widetilde{\F}_{m,a}\left(n;\frac{m}{y^2-1}\right)\frac{z^n}{n!}.
\end{eqnarray*}
Also, in the left-hand side, we have
\begin{equation*}
\frac{e^{-a(2z)}}{y-e^{mz}}=\frac{1}{y-1}\sum_{n=0}^{\infty}\widetilde{\F}_{m,2a}\left(n;\frac{m}{y-1}\right)\frac{z^n}{n!}
\end{equation*}
and
\begin{equation*}
\frac{e^{-a(2z)}}{y+e^{mz}}=\frac{1}{y+1}\sum_{n=0}^{\infty}\widetilde{\F}_{m,2a}\left(n;\frac{-m}{y+1}\right)\frac{z^n}{n!}.
\end{equation*}
Combining these equations and comparing the coefficients of $\frac{z^n}{n!}$ results to
\begin{equation*}
2^{n+1}\widetilde{\F}_{m,a}\left(n;\frac{m}{y^2-1}\right)=\frac{y+1}{y}\widetilde{\F}_{m,2a}\left(n;\frac{m}{y-1}\right)+\frac{y-1}{y}\widetilde{\F}_{m,2a}\left(n;\frac{-m}{y+1}\right).
\end{equation*}
Note that if we set $x=\frac{m}{y-1}$, then $y=\frac{m+x}{x}$. Hence, skipping the tedious computations allow us to write
\begin{equation*}
(m+2x)\widetilde{\F}_{m,2a}(n;x)=2^{n+1}(m+x)\widetilde{\F}_{m,a}\left(n;\frac{x^2}{m+2x}\right)-m\widetilde{\F}_{m,2a}\left(n;\frac{-mx}{m+2x}\right).
\end{equation*}
The next theorem is obtained by applying \eqref{def6}.
\begin{theorem}
The noncentral Tanny-Dowling polynomials satisfy the following explicit formula:
\begin{equation}
\widetilde{\F}_{m,2a}(n;x)=\sum_{k=0}^nk!x^k\left[\frac{2^{n+1}(m+x)x^k\widetilde{W}_{m,a}(n,k)+(-m)^{k+1}\widetilde{W}_{m,2a}(n,k)}{(m+2x)^{k+1}}\right].
\end{equation}
\end{theorem}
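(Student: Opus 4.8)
The plan is to read the claim off the rational-function identity derived immediately above the theorem statement, namely
\begin{equation*}
(m+2x)\widetilde{\F}_{m,2a}(n;x)=2^{n+1}(m+x)\widetilde{\F}_{m,a}\!\left(n;\tfrac{x^{2}}{m+2x}\right)-m\,\widetilde{\F}_{m,2a}\!\left(n;\tfrac{-mx}{m+2x}\right).
\end{equation*}
Since the substantive work — the partial-fraction decomposition in the auxiliary variable $y$ and the bookkeeping that turns it into this relation among the $\widetilde{\F}$'s at the shifted arguments $\frac{x^{2}}{m+2x}$ and $\frac{-mx}{m+2x}$ — is already in hand, all that remains is to expand the three copies of $\widetilde{\F}$ by the defining series \eqref{def6} and collect terms.

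Concretely, I would first divide both sides by $m+2x$ so that $\widetilde{\F}_{m,2a}(n;x)$ stands alone on the left. On the right I would substitute $\widetilde{\F}_{m,b}(n;y)=\sum_{k=0}^{n}k!\,\widetilde{W}_{m,b}(n,k)\,y^{k}$ with the appropriate $b$ and $y$: the argument $y=\frac{x^{2}}{m+2x}$ in the first term contributes a factor $x^{2k}/(m+2x)^{k}$, while $y=\frac{-mx}{m+2x}$ in the second contributes $(-m)^{k}x^{k}/(m+2x)^{k}$. Pulling the overall $1/(m+2x)$ inside each sum gives every summand the common denominator $(m+2x)^{k+1}$, so the two sums merge into a single $\sum_{k=0}^{n}$. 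Factoring $x^{k}$ out of each numerator (writing $x^{2k}=x^{k}\cdot x^{k}$ in the first term) and using the elementary identity $-m(-m)^{k}=(-1)^{k+1}m^{k+1}=(-m)^{k+1}$ in the second, the bracketed numerator becomes exactly $2^{n+1}(m+x)x^{k}\widetilde{W}_{m,a}(n,k)+(-m)^{k+1}\widetilde{W}_{m,2a}(n,k)$, which is the stated formula.

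I do not anticipate any genuine obstacle: granted the preceding generating-function computation, the argument is a short bounded algebraic simplification. The only points demanding care are cosmetic — consolidating $-m(-m)^{k}$ into the single symbol $(-m)^{k+1}$ rather than leaving it as $-(-m)^{k}m$, and keeping the exponent of $(m+2x)$ at $k+1$ after absorbing the leading fraction — together with the observation that the whole chain is an identity of rational functions in $x$, valid away from the zero of $m+2x$ (and from the values of $y$ at which the underlying geometric series diverge), which does not affect the final coefficientwise conclusion.
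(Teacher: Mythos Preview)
Your proposal is correct and follows the paper's own approach exactly: the paper derives the identity $(m+2x)\widetilde{\F}_{m,2a}(n;x)=2^{n+1}(m+x)\widetilde{\F}_{m,a}\!\left(n;\tfrac{x^{2}}{m+2x}\right)-m\,\widetilde{\F}_{m,2a}\!\left(n;\tfrac{-mx}{m+2x}\right)$ and then states that the theorem ``is obtained by applying \eqref{def6},'' which is precisely the expansion-and-collection step you describe. The algebraic details you spell out (the common denominator $(m+2x)^{k+1}$ and the consolidation $-m(-m)^{k}=(-m)^{k+1}$) are exactly what that one-line instruction entails.
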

Since it is already known that $\widetilde{W}_{m,0}(n,k)=\stirlings{n}{k}^{(m)}$, then when $a=0$, the right-hand side can be expressed in terms of the translated Whitney numbers of the second kind. That is,
\begin{equation}
w_n\left(\frac{x}{m}\right)=\sum_{k=0}^nk!x^k\stirlings{n}{k}^{(m)}\left[\frac{2^{n+1}(m+x)x^k+(-m)^{k+1}}{(m+2x)^{k+1}}\right].
\end{equation}
Lastly, when $m=1$, we recover the explicit formula in \eqref{kargin4}.

Finally, we will end by mentioning an explicit formula for $\widetilde{\F}_{m,a}(n;x)$ established in \cite[Theorem 19]{Mah3} that is given by
\begin{equation}
\widetilde{\F}_{m,a}(n;x)=\frac{m}{m+x}\sum_{k=0}^{\infty}\left(\frac{x}{m+x}\right)^k(mk-a)^n.
\end{equation}
This explicit formula entails interesting particular cases. For instance, when $a=0$, 
\begin{equation}
w_n\left(\frac{x}{m}\right)=\frac{m^{n+1}}{m+x}\sum_{k=0}^{\infty}\left(\frac{x}{m+x}\right)^kk^n.
\end{equation}
When $m=1$ and then $x=1$, we get formulas for the ordinary geometric polynomials and numbers. That is,
\begin{equation}
w_n(x)=\frac{1}{x+1}\sum_{k=0}^{\infty}\left(\frac{x}{x+1}\right)^kk^n
\end{equation}
and
\begin{equation}
w_n=\sum_{k=0}^{\infty}\frac{k^n}{2^{k+1}}.
\end{equation}


\begin{thebibliography}{99}

\bibitem{Belbachir}
{H. Belbachir and I. Bousbaa, Translated Whitney and $r$-Whitney numbers: a combinatorial approach, {\it J. Integer Seq.} {\bf 16} (2013), \href{https://cs.uwaterloo.ca/journals/JIS/VOL16/Belbachir/belbachir32.html}{Article 13.8.6}.}

\bibitem{Benoumhani1}
{M. Benoumhani, On Whitney numbers of Dowling lattices, {\it Discrete Math.} {\bf 159} (1996), 13--33.}

\bibitem{Benoumhani2} 
{M. Benoumhani, On some numbers related to Whitney numbers of Dowling lattices, {\it Adv. Appl. Math.} {\bf 19} (1997), 106--116.} 
	
\bibitem{Boyad}
{K. N. Boyadzhiev, A Series transformation formula and related polynomials, {\it Int. J. Math. Math. Sci.} {\bf 23} (2005), 3849--3866.}

\bibitem{Broder}
{A. Z. Broder, The $r$-Stirling numbers, \textit{Discrete Math.} {\bf 49} (1984), 241--259.}

\bibitem{Comtet}
{L. Comtet, {\it Advanced Combinatorics}, D. Reidel Publishing Co., 1974.}

\bibitem{Corcino}
{R. B. Corcino, The $(r,\beta)$-Stirling numbers, {\it The Mindanao Forum}, {\bf 14} (1999), 91--99.}

\bibitem{Dil1}
{A. Dil and V. Kurt, Investigating geometric and exponential polynomials with Euler-Seidel matrices, {\it J. Integer Seq.} {\bf 14} (2011), \href{https://cs.uwaterloo.ca/journals/JIS/VOL14/Dil/dil5.html}{Article 11.4.6}.}

\bibitem{Kargin}
{L. Karg{\i}n, Some formulae for products of geometric polynomials with applications, {\it J. Integer Seq.} {\bf 20} (2017), \href{https://cs.uwaterloo.ca/journals/JIS/VOL20/Kargin/kargin3.html}{Article 17.4.4}.}

\bibitem{Koutras}
{M. Koutras, Non-central Stirling numbers and some applications, {\it Discrete Math.} {\bf 42} (1982), 73--89.}

\bibitem{Mangontarum1}
{M. M. Mangontarum and A. M. Dibagulun, On the translated Whitney numbers and their combinatorial properties, {\it British J. Appl. Sci. Technology} {\bf 11} (2015), 1--15.}

\bibitem{Mah3}
M. M. Mangontarum, O. I. Cauntongan, and A. P. M.-Ringia, The noncentral version of the Whitney numbers: a comprehensive study, \textit{Int. J. Math. Math. Sci.} \textbf{2016} (2016).

\bibitem{Mangontarum2}
{M. M. Mangontarum, A. P. M.-Ringia, and N. S. Abdulcarim, The translated Dowling polynomials and numbers, {\it International Scholarly Research Notices} {\bf 2014}, Article ID 678408, 8 pages, (2014).}

\bibitem{Mezo}
{I. Mez\H{o}, A new formula for the Bernoulli polynomials, {\it Results Math.} {\bf 58} (2010), 329--335.}

\bibitem{Stirling}
{J. Stirling}, Methodus differentialissme tractus de summatione et interpolatione serierum infinitarum, \textit{London}, 1730.

\bibitem{Tanny} 
{S. M. Tanny, On some numbers related to the Bell numbers, {\it Canad. Math. Bull.} {\bf 17} (1975), 733--738.}
	
\end{thebibliography}
\end{document}